\documentclass{article}

\usepackage[utf8]{inputenc}

\usepackage{amsfonts,amsmath,amsthm}

\usepackage{todonotes}
\usepackage{mathtools}

 \usepackage{xcolor}
 
\usepackage{amssymb}

\usepackage{algorithm}
\usepackage{algorithmic}
\usepackage{todonotes}

\newtheorem{theorem}{Theorem}

\newtheorem{definition}{Definition}

\newtheorem{remark}[theorem]{Remark}
\usepackage{multirow}
 \newcommand{\E}{{\mathbb E}}
\def\R{{\mathbb R}}

\def\N{{\mathbb N}}

\newcommand{\mA}{\mathsf{A}}

\newcommand{\mPP}{\mathsf{\Phi}}
\usepackage[affil-it]{authblk}
\usepackage{hyperref}

\DeclareMathOperator{\supp}{supp}

\title{Greedy algorithms for learning via exponential-polynomial splines}

\author[1]{R. Campagna 
\thanks{\href{rosanna.campagna@unicampania.it }{rosanna.campagna@unicampania.it}}}
\author[2]{S. De Marchi 
\thanks{\href{mailto:demarchi@math.unipd.it}{demarchi@math.unipd.it}}}
\author[3]{E. Perracchione
\thanks{\href{perracchione@dima.unige.it}{perracchione@dima.unige.it}}}
\author[4]{G. Santin 
\thanks{\href{mailto:gsantin@fbk.eu}{gsantin@fbk.eu}}}

\affil[1]{University of Campania   ``L. Vanvitelli'', Italy}
\affil[2]{University of Padova, Italy}
\affil[3]{CNR-SPIN, University of Genova, Italy}
\affil[4]{DIGIS, Bruno Kessler Foundation, Italy}

\begin{document}
\maketitle
\begin{abstract}
Kernel-based schemes are  state-of-the-art techniques for learning by data. In this work we extend some ideas about kernel-based greedy algorithms to exponential-polynomial splines, whose main drawback consists in possible {\em overfitting} and consequent oscillations of the approximant. To partially overcome this issue, we introduce two algorithms which perform an adaptive selection of the spline interpolation points based on the minimization either of the sample residuals ({$f$-greedy}), or of an upper bound for the approximation error based on the  spline Lebesgue function ({$\lambda$-greedy}). Both methods allow us to obtain an adaptive selection of the sampling points, i.e. the spline nodes. However, while the {$f$-greedy} selection is tailored to one specific target function, the $\lambda$-greedy algorithm is independent of the function values and enables us to define a priori optimal interpolation nodes. 
\end{abstract}

\section{Introduction}

Scattered data interpolation is one of the most investigated topics in the field of numerical analysis, and it is successfully used for many applications. As a 
consequence, many methods have been developed, including interpolation with polynomials of total degree (see e.g. \cite{Bos2017}), splines approximation  
\cite{HP2021,CR2018,deBoor76}, with its recent developments in the context of machine learning \cite{Bohra,Unser}, and kernel-based methods (refer e.g. to 
\cite{Fasshauer15,Wendland2005}). 

More recently, the so-called Exponential-Polynomial Splines (EPS) have been introduced with the main purpose of approximating univariate multi-exponential decay functions with a smoothing effect \cite{bayonacc,Campagna2019,AMC_CCC}. Such a smoothing strategy is implemented by considering a regularization parameter. 

In this paper we are interested in the design of appropriate sampling strategies for EPS interpolation. Namely, assuming to be given either only an input space discretization, or a dataset of input points and corresponding function evaluations, we aim at selecting a small subset of approximation points to be used to construct the EPS interpolant.
We consider in particular incremental methods that, given an initial set of samples, construct an EPS interpolant by iteratively selecting a new point at at each iteration. The iterative rule is dictated by greedy methods (see \cite{Temlyakov2008}), which have been investigated e.g. for kernel methods (see e.g. \cite{Haasdonk2018,DSW,SH2017,WH2013})
and lead to sparse models which turn out to be helpful in many applications, see e.g. \cite{DUTTA2021110378}. This iterative selection is a convenient proxy for the optimal selection of the sampling points, which is in turn usually an extremely computationally demanding procedure.

Since the greedy selection is based on the maximization of an error indicator, to use such schemes in the context of EPS we first study pointwise error bounds. We are able to bound the pointwise error thanks to the definition of the cardinal form of the EPS interpolant that then allows us to introduce the Lebesgue function and constant \cite{Brutman,Brutman1}. The latter are known to be stability indicators for polynomial bases; see e.g.  \cite{Bayliss,BERRUT199777,BOS200615,BOS2011504,Bos2017,DEMARCHI2021125628}. Based on this new error indicator, we define an algorithm for selecting data-independent  points for EPS. 

Furthermore, we propose a second extraction strategy that takes into account also the function values. This kind of approach is usually more expensive, but it allows us to select points that are tailored to one specific target function, and thus are usually able to better resolve local features such as steep gradients or oscillations.

In both cases, we numerically explore the behavior of the node distribution for the spline basis, and we test our findings under different perspectives. 

The paper is organized as follows. In Section \ref{preliminari} we briefly review the basics of greedy methods and EPS interpolation. Error bounds and adaptive strategies for the greedy selection of the nodes are presented in Section \ref{sec:greedy}. Some numerical experiments are presented and discussed in Section \ref{sec:numerics}, while conclusions with an outline of future works are provided in Section \ref{Concl}.

\section{Exponential splines and greedy schemes}
\label{preliminari}

In this section we present the main features of EPS and greedy methods. 

We consider a function $f:[a,b]\to\R$ with $[a, b]\subset\R$ and an associated set of {function} values $F\coloneqq\{y_i\coloneqq f(x_i)\}_{i=1}^{n}$ sampled at a data set  $X\coloneqq\{x_i\}_{i=1}^{n}$, with $a=x_1 < x_2< \dots < x_{n} = b$. Our goal is to construct an approximation of the unknown function $f$, and we concentrate on interpolatory schemes, i.e., our model $I_X$ of $f$ satisfies $I_X(x_i) = y_i$, $1\leq i\leq n$.

In general terms, given a normed linear space of functions defined in $[a,b] \subset \Omega $,   and an associated basis   $\{b_j\}_{j=1}^n\subset C([a, b])$, an interpolant $I_X:[a,b]\to\R$ may be defined as 
\begin{equation*}
    I_{X}(x) = \sum_{j=1}^n c_j b_j(x), \quad x \in [a,b].
\end{equation*}
Provided that $\{b_j\}_{j=1}^n$ form a Haar system (see e.g. \cite{Wendland2005}), the matrix $\mA\in\R^{n\times n}$ with $\mA_{ij} \coloneqq b_j(x_i)$, $1\leq i,j\leq n$, is invertible for any set of interpolation points, and the coefficients $\boldsymbol{c}\coloneqq[c_1, \dots, c_n]^{\intercal}\in \R^{n}$ of the interpolant may be determined by solving the system
\begin{equation*}
    \mA \boldsymbol{c} = \boldsymbol{y},    
\end{equation*}
where $\boldsymbol{y}\coloneqq[y_1, \dots, y_n]^{\intercal}\in \R^{n}$. Popular basis functions that meet these requirements are, for instance, Radial Basis Functions (RBFs), or monomials of total degree $n-1$. 

\subsection{Exponential splines}\label{sec:exp_splines}
In this work we focus our attention on a particular
 basis $\{b_j\}_{j=1}^n$ of 
splines that was introduced in \cite{Campagna2019}. In this section we recall the  definition of the smoothing spline model introduced in the cited paper and give the basis definition in   details.

\begin{definition}
    Let $\{(x_i,y_i)\}_{i=1}^{n}\subset\Omega\times\R$ be given, with $a=x_1$ and $b=x_n$. 
Let $w_1,\ldots, w_n$ be non zero weights, $\beta>0$ be a regularization parameter. 

For $\alpha>0$ and $\mathcal{L}_2u:=u''+2\alpha\, u'+\alpha^2\, u$, we denote as $\mathcal{L}_2^*$ the adjoint, and as $\mathbb{E}_{4, \alpha} := \{e^{\alpha x},\,x e^{\alpha x},\ e^{-\alpha x},\  x e^{-\alpha x}\}$ the null space of $\mathcal{L}_2^*\mathcal{L}_2$.

Then the \textbf{smoothing exponential  spline} $I_{X,\alpha}(f):[a,b]\to\R$  is the solution of the penalized least square problem
\begin{equation}\label{defexpspline}
 \min_{c_1,\ldots, c_n}\ 
 \displaystyle {\sum}_{i=1}^n w_i \left(y_i-\sum_{j=1}^n c_j \varphi_j(x_i)\right)^2\, +\, \beta   \, \displaystyle\int_{a}^{b} \left( \sum_{j=1}^{n}c_j   {\cal L}_2\varphi_j(x)\right)^2 dx, 
\end{equation}
with $\varphi_{j}|_{[x_i, x_{i+1}]} \in \mathbb{E}_{4, \alpha}$, where $_{|A}$ denotes the restriction on a set $A\subset\R$. 
\end{definition}

In the same paper \cite{Campagna2019}, an  {\em optimal} basis $\{\varphi_j\}_{j=1}^n$ of \emph{exponential  B-splines}, also referred to as Generalized B-splines {GB-splines}, has been defined. The basis functions have the following properties (see fig. \ref{bspline}): are  {bell-shaped} with compact support, identified by  $5$ {nodes}, with the blending segments belonging to $\E_{4,\alpha}$
and $C^2$-smoothness. 
The generic basis function $\varphi$ can then be expressed as
   \begin{align}\label{eq:global_basis}
\varphi(x) |_{[x_i, x_{i+1}]} =
\sum_{k=1}^4 b_{i,k} g_{k}(x),\quad g_{k} \in \E_{4, \alpha},
\end{align}
where $x_i$ with $i=1,\dots, n-1$, denotes the left point of the partition element, and $k=1,\dots, 4$ denotes the index of the local basis element.
  Indeed,   each function $\varphi$ in the form \eqref{eq:global_basis} has $4 (n-1)$ degrees of freedom, given by the coefficients $\{b_{i,k}\}$ with $1\leq i\leq n-1$, $1\leq k\leq 4$. Then, to define such a  basis of dimension $n$, the nodes vector has to be  augmented with two extra nodes before $x_1$ and two others after $x_n$,   i.e.   an {augmented} node set as $x_{-1}<x_0<x_1=a< \ldots < x_n=b<x_{n+1}<x_{n+2}$ has to be considered.
Such extra nodes affect the construction of the so called {\em boundary} basis functions: $\varphi_1, \varphi_2$ and   $\varphi_{n-1}, \varphi_{n}$.

     \begin{figure}[htb]
\centering \includegraphics[width=0.5\textwidth]{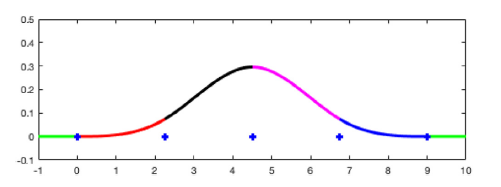}
\centering\caption{An example of GB-spline
with   segments in the spaces $\E_{4,\alpha}$. \label{bspline}}
\end{figure} 
 
\begin{remark}
We assume that these four points are fixed, indeed, numerically we observed that, for our scope, the approximation is not very sensitive with respect to their selection.
\end{remark}
In this paper we focus on unweighted interpolation, i.e., no smoothing parameter nor weights are considered: we set $\beta=0$ and $w_i=1$, $1\leq i\leq n$. Moreover, the reason why the authors in \cite{Campagna2019}
assume $\alpha>0$
is due to the fact that they are interested in modeling functions that decay exponentially. In order to force the model decreasing outside, the authors assume also that 
 the segments of the GB-splines falling outside $[x_1,x_n]$  are defined in a two dimensional space $\E_{2,\alpha} : =  \{ e^{-\alpha x},  x\ e^{-\alpha x}\}$, null space of $\mathcal{L}_2$
(for further details see \cite{Campagna2019}).  
The global space of the exponential-polynomial splines ${\mathcal E}_{X, \alpha}(\E_{2,\alpha},\E_{4,\alpha})$ is then defined by gluing local patches defined over each interval such that 
 ${\mathcal E}_{X, \alpha}(\E_{2,\alpha},\E_{4,\alpha})\subset C^2([a,b])$.

Here instead, we focus on a more general model, where only $\mathbb{E}_4$ is considered.
Then, we assume ${\mathcal E}_{X,\alpha}\coloneqq   {\mathcal E}_{X,\alpha}(\E_4)$ and any of its elements can be expressed using coefficients $b_{i,k}$ as in (\ref{eq:global_basis}).


 Following \cite{Campagna2019}, we use a Bernstein-like basis to represent  each segment of the GB-splines $\{\varphi_j\}_{j=1}^n$ (see \cite[Appendix]{Campagna2019} for an explicit construction), as follows:
\begin{equation}\label{ber}
 \varphi_j(x)|_{[x_i, x_{i+1}]}=\sum_{k=1}^4\gamma_{i,j,k}{B}_k(x-x_j),
\end{equation}
where ${B}_k$, $k=1,\ldots,4,$ are Bernstein like functions, $1\leq i\leq n-1$ and $1\leq j\leq n$. The existence and uniqueness of such a functional space is provided in \cite[Theorem 2.1]{Campagna2019}.  The advantage of the GB-spline basis is that the  computations can be performed locally in the support of each $\varphi_j$, indeed $\supp(\varphi_j)\subset[x_{j-2}, x_{j+2}],\;\; 1\leq j\leq n$. In particular, the global interpolation matrix $\mPP$ with entries given by $\mPP_{ij} \coloneqq \varphi_j(x_i)$ is nonzero only on the main diagonal and on the two upper and lower diagonals.\\

Fixed the space of the described exponential splines, any function $f:[a,b]\to\R$ can now be approximated on  $X\coloneqq\{x_i\}_{i=1}^{n}$ via  exponential-polynomial splines interpolation of the associated samples $F = \{y_i=f(x_i)\}_{i=1}^{n}$, i.e., 
\begin{align}\label{eq:interpolant}
I_{X,\alpha}(f)(x) = \sum_{j=1}^n c_j \varphi_j(x),
\end{align} 
with a vector of coefficients $\boldsymbol{c}\coloneqq[c_1, \dots, c_n]^{\intercal}\in \R^{n}$ such that 
\begin{equation}\label{sys}
    \mPP \boldsymbol{c} = \boldsymbol{y},    
\end{equation}
where $\boldsymbol{y}\coloneqq[y_1, \dots, y_n]^{\intercal}\in \R^{n}$. Observe that we have $I_{X,\alpha}(f) = f$ for all $f\in {\mathcal E}_{X,\alpha}$, i.e., every function in ${\mathcal E}_{X,\alpha}$ is uniquely determined by its values on $X$. We summarize in Algorithm 1 the steps for computing the EPS interpolant. 

\begin{algorithm}[H]
\small
\caption{\textbf{Pseudo-code for EPS}}
\begin{algorithmic}[1]
\STATE Definition of the function space $\mathbb{E}_4$   by setting $\alpha$.
\STATE Definition of the augmented nodes to define the boundary basis functions.
\STATE Definition of the GB-spline basis functions in   Bernstein-like basis, $\{\varphi_j\}_{j=1}^n$.\\
\STATE Computation of the collocation matrix as in \eqref{sys} and solution of the collocation system. 
 		\end{algorithmic}
	\label{alg:alg1}
\end{algorithm}

Before investigating the use of greedy schemes for EPS, we briefly recall the main ideas behind greedy techniques. 

\subsection{Greedy schemes}

Given $X$ and $F$, the main goal of the greedy algorithms consists in selecting a suitable subset $\tilde{X} \subset X$ so that the {\em greedy interpolant} is constructed on a reduced number of data producing an approximation of $I_X$. Such iterative algorithms belong essentially to two classes:
\begin{itemize}
    \item Residual-based greedy schemes: the set $\tilde{X}$ is constructed taking into account the function values $F$.
    \item Error-based greedy methods: the set $\tilde{X}$ is built independently of the function values $F$.
\end{itemize} 

The general iterative rules for these two algorithms are summarized in Table \ref{tab_it}, where $\lambda$ denotes a pointwise approximation error independent of the function values. Both methods will be investigated in the next section for the special case of EPS.

\begin{table}[!h]

\begin{tabular}{l|c}
 \hline 
 \hline \\ [-0.9ex] 
 Greedy Method & Iterative Rule \\ [0.9ex] 
 \hline  \\ [-0.9ex] 
 Residual-based   & $\displaystyle x^*={\rm argmax}_{x \in \Omega \setminus { X}} |f(x)-I_{{X},\alpha}(f)(x)|$
 \\   [1.9ex] 
 Error-based &   $\displaystyle x^*={\rm argmax}_{x \in \Omega \setminus { X}} \lambda(x)$  \\ [1.9ex] 
 \hline \hline
\end{tabular}
\caption{Iterative rules for residual and error-based greedy strategies.}
\label{tab_it}
\end{table}

\section{Greedy schemes for EPS}
\label{sec:greedy}
In this section we first recall a simple residual-based greedy scheme, that is known as $f$-greedy in  literature (see \cite{SchWen2000}), and that can be easily used with any approximation basis. On the other hand, error-based greedy schemes need to be tailored for the considered basis, and we will discuss their derivation in the case of EPS.

\subsection{Residual-based greedy selection}

As already mentioned, $f$-greedy schemes are quite straightforward to extend to any kind of basis. 
Precisely, we consider an initial (training) set of sorted data ${\tilde X}=\{x_i\}_{i=1}^q \subset X$, with $x_1=a, x_q=b$ and we also keep the augmented nodes fixed.
Then, given $F$ and a fixed tolerance $\tau$, the residual-based greedy scheme for exponential splines is summarized in Algorithm 2. 

\begin{algorithm}[H]
\small
\caption{\textbf{Pseudo-code for the $f$-greedy algorithm}}
\begin{algorithmic}[1]
\STATE Take an initial set of sorted data ${\tilde X}=\{x_i\}_{i=1}^q \subset X$; $x_1=a, x_q=b$ and $q\geq2$. 
\STATE Compute an initial interpolant $I_{\tilde{X},\alpha}(f)$ as in \eqref{eq:interpolant}.
\STATE While $\max_{x_i \in X\setminus {\tilde X}} |f(x_i)-I_{\tilde{X},\alpha}(f)(x_i)| > \tau $:
    \begin{enumerate}    
    \item Define $x^*={\rm argmax}_{x_i \in X\setminus {\tilde X}} |f(x_i)-I_{\tilde{X},\alpha}(f)(x_i)|$.
    \item Set ${\tilde X}= {\tilde X}\cup \{x^*\}$ and sort  ${\tilde X}$.
    \item Compute $I_{\tilde{X},\alpha}(f)$ as in \eqref{eq:interpolant}.
\end{enumerate}
 		\end{algorithmic}
	\label{alg:alg2}
\end{algorithm}

The result of the $f$-greedy scheme is thus a set of data locations ${\tilde X}=\{x_i\}_{i=1}^{\tilde{n}}$ with $a=x_1 < \ldots < x_{\tilde{n}} = b$, and the corresponding interpolant $I_{\tilde{X},\alpha}(f)$.
Since we usually have ${\tilde{n}} \ll n$, the greedy interpolant $I_{\tilde{X},\alpha}(f)$ can be understood as a sparse approximation of $I_{{X},\alpha}(f)$.

This scheme is very easy to implement, and additionally the interpolation points are selected adaptively in order to be suited for the particular target function $f$, and they are thus expected to provide an accurate approximation. 

On the other hand this adaptivity may backfire. Indeed, in many applications one aims instead at the selection of a set of interpolation points that can be used to approximate $N_f\in\N$ different functions. In this case, the $f$-greedy algorithm should be executed $N_f$ times, and this constitutes a computational drawback. To deal with this scenario, we drive our attention towards error-based greedy schemes.

\subsection{Error-based greedy selection}

To investigate the error-based greedy selection, we need to introduce a pointwise error bound for EPS interpolation.

\subsubsection{Lagrange functions and Lebesgue constant}
Given $\{\varphi_{j}\}_{j=1}^n$ as in \eqref{eq:global_basis}, since the associated matrix $\mPP$ is invertible we may write $d_{j\ell}\coloneqq(\mPP^{-1})_{j\ell}$. In this way we have that the functions
\begin{align*}
    \psi_\ell(x)\coloneqq \sum_{j=1}^n d_{j\ell}\varphi_j(x),\;\; 1\leq \ell\leq n,
\end{align*}
satisfy the cardinal conditions 
\begin{align}\label{eq:cardinal_conditions}
    \psi_\ell(x_i) = \delta_{i\ell} \;\; 1\leq i, \ell\leq n,
\end{align}
i.e., they are a global Lagrange (or cardinal) basis. To see this, just observe that for $1\leq i,\ell \leq n$, it holds true that
\begin{align*}
\psi_\ell(x_i)
&= \sum_{j=1}^n \varphi_j(x_i) d_{j\ell}
= \sum_{j=1}^n \mPP_{ij} (\mPP^{-1})_{j\ell}
= \left(\mPP\cdot \mPP^{-1}\right)_{i\ell}
=\delta_{i\ell},\quad 1\leq i, \ell\leq n.
\end{align*}
Using the cardinal basis, the interpolant \eqref{eq:interpolant} may be written as
\begin{align}\label{eq:lagrange_interpolant}
    I_{X,\alpha}(f)(x) = \sum_{j=1}^n f(x_j) \psi_j(x),\;\; x\in [a, b].
\end{align}

Some examples of cardinal bases for the EPS are plotted in Figure \ref{fig:1}. In this illustrative example, the cardinal functions are computed for $n=8$ equispaced, Halton and Chebyshev data locations, and are evaluated on $400$ equispaced points.

\begin{figure}
    \centering
   \includegraphics[clip,trim=2cm 0.73cm 2cm 0.73cm,scale = 0.35]{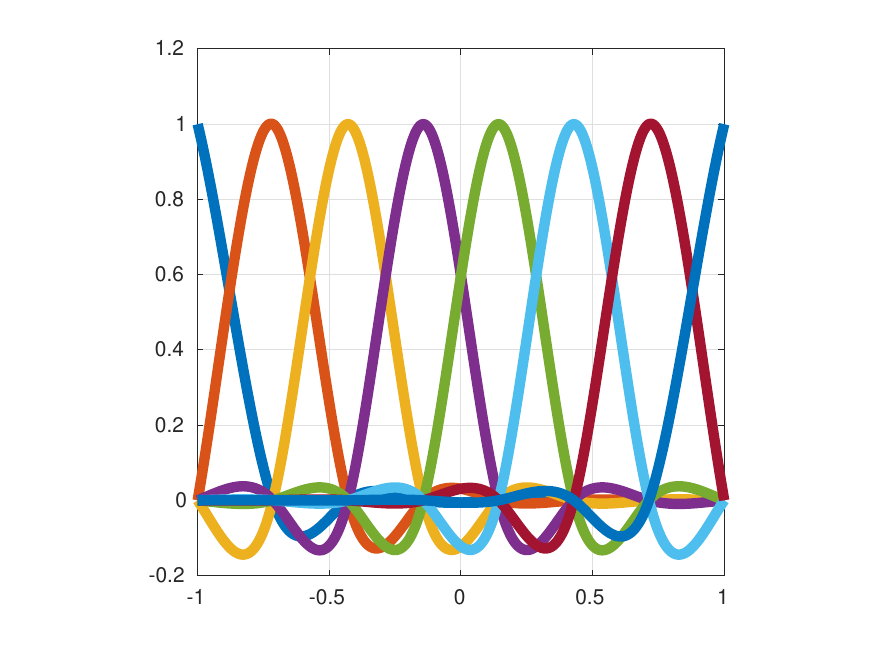}
    \includegraphics[clip,trim=2cm 0.73cm 2cm 0.73cm,scale = 0.35]{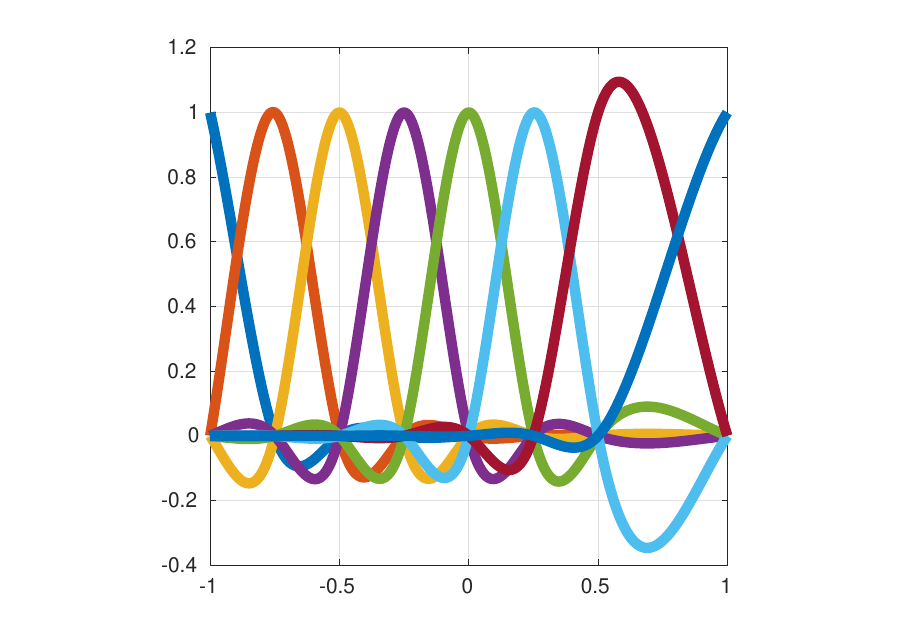}
    \includegraphics[clip,trim=2cm 0.73cm 2cm 0.73cm,scale = 0.35]{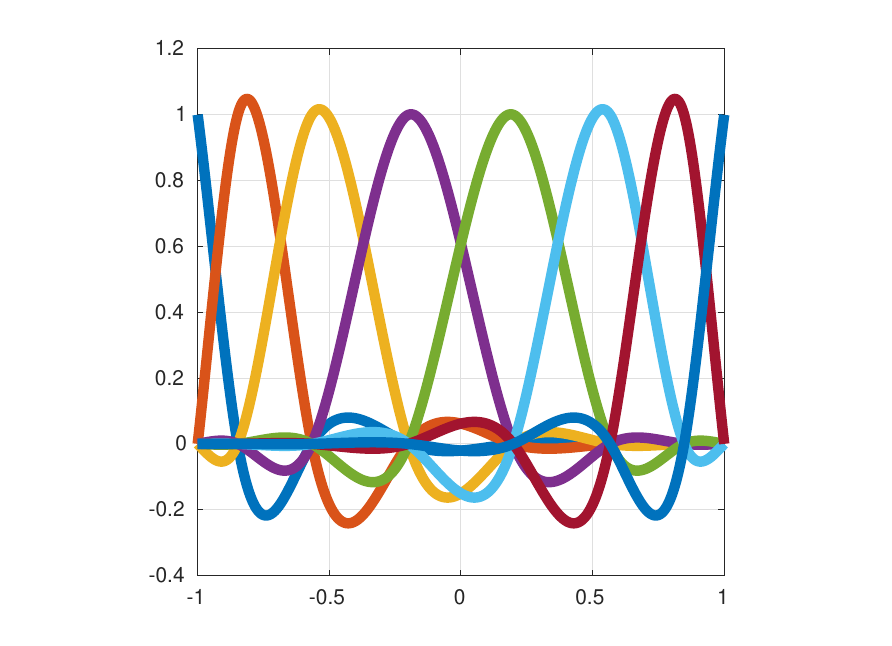}  
    \caption{From left to right: cardinal functions computed on $n=8$ equispaced, Halton and Chebyshev data, respectively.}
    \label{fig:1}
\end{figure}

Once the cardinal basis is computed,  the Lebesgue function is defined in the usual way as 
\begin{equation*}
    \lambda(x)\coloneqq\lambda(x_1,\ldots,x_n;x)\coloneqq\sum_{j=1}^n |\psi_j(x)|,\;\;x\in [a,b],
\end{equation*}
and its maximum value is called the Lebesgue constant, defined by
\begin{equation*}
\Lambda\coloneqq\Lambda(x_1,\ldots,x_n;x)\coloneqq\textrm{sup}_{a\leq x\leq b}\sum_{j=1}^n |\psi_j(x)|.
\end{equation*}
Both $\lambda$ and $\Lambda$ depend on the location of the interpolation points and on their number $n$, but not on the function values and, as will be evident, they are  stability indicators. 

\subsubsection{Lagrange functions and stability}

It is known \cite{novakovic2010estimates} that, in the interpolation problem, the sensitivity of the solution is   determined by the condition number $\kappa_p (\mPP)$ of the collocation matrix  
\begin{equation}\label{condPhi}
\kappa_p (\mPP)=\|\mPP\|_p \|\mPP^{-1} \|_p,
\end{equation}
where $\|\cdot \|_p$   denotes a standard operator p-norm, with $1 \leq  p \leq  \infty $.
Carl de Boor \cite{deBoor76} conjectured that the interpolation by (polynomial) B-splines of degree $d$ at node averages is bounded by a function that depends only on $d$, regardless of the nodes themselves. Several works  disproved this conjecture; moreover improvements are also available in literature (see \cite{LYCHE1978202,novakovic2010estimates,SCHERER1999217}). In the numerical experiments we will use (\ref{condPhi}), with $p=2$, to estimate the amplification  factor for the data noise.

In this work we aim to define a theoretical estimate for the residual bound, to formulate   
a possible stopping rule based {\em only} on the nodes distributions. At this end we relate the Lebesgue constant $\Lambda$ and the condition number $\kappa_2 (\mPP)$.

If the interpolant is expressed in the Lagrange basis, then the Lebesgue constant also estimates the conditioning of the interpolation problem.
Following \cite{Carnicer:78093} we define the evaluation functionals
 $$\xi_i(f)\coloneqq f(x_i)=y_i,\quad i=1,\ldots,n. $$ Then, we remark that (\cite[\S 2]{cheney2009course}) the conditioning of the Lagrange representation corresponding to the evaluation functionals $\xi_1,\ldots ,\xi_n$ coincides with the Lebesgue function, i.e. 
\begin{equation}
   \label{mucsi} 
\mu(x ; \xi) = \sum_{i=1}^n | \psi_i (x) | = \lambda(x).
\end{equation}
Moreover, the following theorem  shows that the Lagrange representation, in terms of the evaluation functionals, has optimal conditioning. The conditioning of any other representation is greater than the conditioning of the Lagrange representation and the quotient can be bounded by the {\em Skeel condition number} of  the inverse   collocation matrix of the corresponding basis \cite{Carnicer:78093}. In a similar way,  it also holds true for $I_{X,\alpha}(f)$, being expressed in cardinal form as in (\ref{eq:lagrange_interpolant}).  
\begin{theorem}
 Let $x \in [x,b]$,  $I_{X,\alpha}(f)(x) = \sum_{i=1}^n c_i \phi_i(x)$ be a representation of the Lagrange interpolation operator 
  and let $\mPP = (\phi_j (x_i ))$,  $i, j=1,\ldots,n$, be the collocation matrix. Then the following inequality holds true
\[
\mu(x ; \xi) \leq \mu(x ; \phi) \leq \mu_{\textrm{Skeel}}(\mPP^{-1})\mu(x ; \xi),
\]
where $\mu(x ; \xi)$ is defined in (\ref{mucsi}).
\end{theorem}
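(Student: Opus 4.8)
The plan is to first pin down the two notions of conditioning in the statement. Following the convention of \cite{Carnicer:78093,cheney2009course}, I read $\mu(x;\phi)$ as the worst-case amplification, at the point $x$, of unit-size data under the representation in the basis $\{\phi_i\}$: writing the coefficients as $\boldsymbol{c}=\mPP^{-1}\boldsymbol{y}$, I would set $\mu(x;\phi)=\sup_{\|\boldsymbol{y}\|_\infty\le 1}\sum_{j=1}^n|c_j|\,|\phi_j(x)|$, and similarly for the evaluation functionals $\xi$, whose associated representation is exactly the cardinal one. With this reading the identity \eqref{mucsi}, i.e. $\mu(x;\xi)=\sum_\ell|\psi_\ell(x)|=\lambda(x)$, is immediate, since the cardinal coefficients coincide with the data $y_\ell$ and the supremum over $\|\boldsymbol{y}\|_\infty\le1$ is attained at $y_\ell=\pm1$. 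The only algebraic facts I would need are the two change-of-basis relations coming from invertibility of $\mPP$: the expansion $\psi_\ell=\sum_j(\mPP^{-1})_{j\ell}\,\phi_j$, and dually $\phi_j(x)=\sum_\ell\phi_j(x_\ell)\psi_\ell(x)=\sum_\ell\mPP_{\ell j}\psi_\ell(x)$, the latter expressing that the cardinal interpolant reproduces every basis element.

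For the lower bound $\mu(x;\xi)\le\mu(x;\phi)$ I would argue by exhibiting an extremal datum. Fix $x$, choose $\hat y_\ell=\operatorname{sign}(\psi_\ell(x))$ so that $\|\hat{\boldsymbol{y}}\|_\infty\le1$, and let $\hat{\boldsymbol{c}}=\mPP^{-1}\hat{\boldsymbol{y}}$. Since the cardinal and the $\phi$-representation produce the \emph{same} interpolant, the two expansions evaluate identically at $x$, whence
\[
\mu(x;\phi)\ge\sum_j|\hat c_j|\,|\phi_j(x)|\ge\Bigl|\sum_j\hat c_j\phi_j(x)\Bigr|=\Bigl|\sum_\ell\hat y_\ell\psi_\ell(x)\Bigr|=\sum_\ell|\psi_\ell(x)|=\mu(x;\xi).
\]
This easy half uses only the triangle inequality and the admissibility of $\hat{\boldsymbol{y}}$.

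The substantive part is the upper bound, and here the plan is to insert the \emph{second} change-of-basis relation, $\phi_j(x)=\sum_\ell\mPP_{\ell j}\psi_\ell(x)$, so that $|\psi_\ell(x)|$ can be factored out: for any admissible $\boldsymbol{y}$,
\[
\sum_j|\phi_j(x)|\,|c_j|\le\sum_j\sum_\ell|\mPP_{\ell j}|\,|\psi_\ell(x)|\,|c_j|=\sum_\ell|\psi_\ell(x)|\Bigl(\sum_j|\mPP_{\ell j}|\,|c_j|\Bigr).
\]
I would then bound the coefficients componentwise, $|c_j|\le\sum_m|(\mPP^{-1})_{jm}|\,|y_m|\le\sum_m|(\mPP^{-1})_{jm}|$ (valid since $\|\boldsymbol{y}\|_\infty\le1$), and recognise the inner factor as a row sum of an entrywise matrix product: $\sum_j|\mPP_{\ell j}|\sum_m|(\mPP^{-1})_{jm}|=\sum_m\bigl(|\mPP|\,|\mPP^{-1}|\bigr)_{\ell m}\le\bigl\||\mPP|\,|\mPP^{-1}|\bigr\|_\infty$. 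Because $(\mPP^{-1})^{-1}=\mPP$, this last quantity is exactly $\mu_{\textrm{Skeel}}(\mPP^{-1})$, so the chain gives $\sum_j|\phi_j(x)|\,|c_j|\le\mu_{\textrm{Skeel}}(\mPP^{-1})\sum_\ell|\psi_\ell(x)|$ uniformly in $\boldsymbol{y}$; taking the supremum over $\|\boldsymbol{y}\|_\infty\le1$ yields $\mu(x;\phi)\le\mu_{\textrm{Skeel}}(\mPP^{-1})\,\mu(x;\xi)$.

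I expect the main obstacle to be bookkeeping rather than depth: one must choose the correct direction of the change of basis, expanding $\phi_j$ in the cardinal functions rather than the reverse, so that the double sum reassembles into the entrywise product $|\mPP|\,|\mPP^{-1}|$ whose $\infty$-norm is the Skeel number. The naive substitution $\psi_\ell=\sum_j(\mPP^{-1})_{j\ell}\phi_j$ produces an estimate in the wrong direction and does not close the argument. A secondary point to state carefully is that the conditioning is a property of the \emph{representation}, i.e. a supremum over normalized data; this is precisely what makes $\mu(x;\xi)$ equal the scale-invariant Lebesgue function $\lambda(x)$ and renders the whole chain homogeneous under rescaling of the basis.
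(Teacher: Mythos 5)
Your proof is correct, but there is nothing in the paper to compare it against: the paper states this theorem without proof, as a result recalled from the cited literature on the conditioning of Lagrange representations (\cite{Carnicer:78093}, with the identity \eqref{mucsi} taken from \cite{cheney2009course}). So your argument is not an alternative to the paper's proof — it supplies the proof the paper omits, and it does so along what is essentially the standard route in that literature. The one interpretive step you had to take, since the paper never defines $\mu(x;\phi)$, is to read the conditioning of a representation as $\sup_{\|\boldsymbol{y}\|_\infty\le 1}\sum_j |c_j|\,|\phi_j(x)|$ with $\boldsymbol{c}=\mPP^{-1}\boldsymbol{y}$; this is the right reading, being the only one under which \eqref{mucsi} itself holds, and your two halves then close exactly as claimed. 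The lower bound is sound: $\hat y_\ell=\operatorname{sign}(\psi_\ell(x))$ is admissible, and the two expansions agree at $x$ because they represent the same (unique) interpolant of the data $\hat{\boldsymbol{y}}$. The upper bound is also sound: the expansion $\phi_j=\sum_\ell \mPP_{\ell j}\psi_\ell$ is legitimate since the cardinal functions span the same space and interpolation reproduces it, and your double sum correctly reassembles into row sums of $|\mPP|\,|\mPP^{-1}|$. It is a good consistency check that this yields precisely $\bigl\||\mPP|\,|\mPP^{-1}|\bigr\|_\infty$, i.e. the Skeel number of $\mPP^{-1}$ rather than of $\mPP$, which is the constant named in the statement; your closing remark is also apt, since substituting $\psi_\ell=\sum_j(\mPP^{-1})_{j\ell}\phi_j$ instead would only reproduce the first inequality, not the second.
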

The conditioning  
measures   the sensitivity of the representation to the error propagation.
As concern this, let us consider the problem of interpolating perturbed data, i.e., for $i=1,\ldots,n$: $${\tilde{y}_i}={ {y}_i}(1+\sigma_i) \quad \textrm{for some } \quad \boldsymbol{\sigma }=(\sigma_1,\ldots,\sigma_n), \quad s.t.\quad  | { \sigma_i } |\leq C\varepsilon+O(\varepsilon^2),$$
where $C$ is a constant and $\varepsilon$ denotes the {\em machine precision}.   Given $x \in [a,b]$, let us define   $$\eta(\boldsymbol{f},x)=\sum_{j=1}^n | \psi_j(x) | \|\boldsymbol{f }\|_{\infty},$$
then the so-called rounding error can be bounded by 
\[\varepsilon_{\textrm{rounding}}(x)=|{I}_{X,\alpha}(\tilde{f})-I_{X,\alpha} (f) (x)|\leq  \|{\bf \tilde{f}}-{\bf {f}}\|_{\infty}\eta({\bf f},x).  \]
The rounding error, together with the approximation error
$$\varepsilon_{\textrm{approx}}(x)=\left|f(x) - I_{X,\alpha}(f)(x)\right|,$$
defines the global error upper bound as 
$$
\varepsilon(x) = \varepsilon_{\textrm{approx}}(x) + \varepsilon_{\textrm{rounding}}(x), \quad x \in [a,b].
$$

To define our greedy strategy, we essentially have to bound the approximation error, because differently from the rounding error, it is independent of the function values. To this aim we bound the associated Lebesgue function $\lambda$ which depends only on the points. We refer to this approach as {\em $\lambda$-greedy}.

\subsubsection{Lebesgue function and error estimation} We start by proving the following result. 

\begin{theorem}[Approximation error]
Let $f\in C([a, b])$ and let $f^\star_{X,a}\in {\mathcal E}_{X,a}$ be its best approximation in ${\mathcal E}_{X,a}$ with respect to the norm $\|\cdot\|_\infty$. Then it holds that
\begin{align}\label{eq:lebesgue_error}
\left|\left(f - I_{X,\alpha}(f)\right)(x)\right|\leq \left(1 + \lambda(x) \right) \left\|f - f^\star_{X,a}\right\|_\infty, \;\; x\in [a,b].
\end{align}
\end{theorem}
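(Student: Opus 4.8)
The plan is to exploit the fact that $I_{X,\alpha}$ is a linear projection onto the spline space: as noted just after \eqref{sys}, the interpolation operator reproduces every element of ${\mathcal E}_{X,a}$, so $I_{X,\alpha}(g) = g$ for all $g \in {\mathcal E}_{X,a}$, and in particular $I_{X,\alpha}(f^\star_{X,a}) = f^\star_{X,a}$. The argument is the classical Lebesgue-constant estimate transferred to the EPS setting, and the only structural ingredients it requires are this reproduction property together with the cardinal representation \eqref{eq:lagrange_interpolant}.

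First I would insert and subtract the best approximant at the point $x$ and apply the triangle inequality, writing
\[
\left|(f - I_{X,\alpha}(f))(x)\right| \leq \left|(f - f^\star_{X,a})(x)\right| + \left|(f^\star_{X,a} - I_{X,\alpha}(f))(x)\right|.
\]
The first term is bounded pointwise by $\|f - f^\star_{X,a}\|_\infty$, which will contribute the summand $1$ in the prefactor. For the second term I would use reproduction to rewrite $f^\star_{X,a} = I_{X,\alpha}(f^\star_{X,a})$ and then invoke linearity of the operator to collapse the difference of the two interpolants into the single quantity $I_{X,\alpha}(f^\star_{X,a} - f)$.

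Next I would expand this remaining interpolant in the cardinal basis via \eqref{eq:lagrange_interpolant}, so that
\[
\left|I_{X,\alpha}(f^\star_{X,a} - f)(x)\right| = \left|\sum_{j=1}^n (f^\star_{X,a} - f)(x_j)\,\psi_j(x)\right| \leq \sum_{j=1}^n \left|(f^\star_{X,a} - f)(x_j)\right|\,|\psi_j(x)|.
\]
Bounding each nodal value $|(f^\star_{X,a} - f)(x_j)|$ by $\|f - f^\star_{X,a}\|_\infty$ and recognizing $\sum_{j=1}^n |\psi_j(x)| = \lambda(x)$ as the Lebesgue function then yields $\lambda(x)\,\|f - f^\star_{X,a}\|_\infty$ for the second term. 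Summing the two contributions produces the claimed factor $1 + \lambda(x)$.

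I do not expect a substantial obstacle: the estimate is a direct adaptation of the textbook interpolation error bound, and the only points requiring care are that $I_{X,\alpha}$ genuinely fixes the spline subspace and is linear. Both hold by construction here, the former being a consequence of the invertibility of $\mPP$ and the cardinal conditions \eqref{eq:cardinal_conditions}. I would close by remarking that the appearance of $\lambda(x)$ is exactly what justifies driving the $\lambda$-greedy selection by the minimization of the Lebesgue function, since reducing $\lambda$ tightens this bound uniformly in the target $f$.
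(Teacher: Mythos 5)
Your proposal is correct and follows essentially the same route as the paper's proof: the same insertion of the best approximant $f^\star_{X,a}$ with the triangle inequality, the same use of the reproduction property and linearity to reduce the second term to $\left|I_{X,\alpha}\left(f^\star_{X,a}-f\right)(x)\right|$, and the same expansion in the cardinal basis \eqref{eq:lagrange_interpolant} to extract the Lebesgue function $\lambda(x)$. No gaps; nothing further is needed.
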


\begin{proof}
Since $I_{X,a}(g) = g$ for all $g\in {\mathcal E}_{X,a}$, and in particular for $g=f^\star_{X,a}$, we have that 
\begin{align}\label{eq:intermediate_lemm_proof}
\left|f(x) - I_{X,\alpha}(f)(x)\right|\nonumber
&=\left|f(x) - f^\star_{X,a}(x) + f^\star_{X,a}(x)- I_{X,\alpha}(f)(x)\right|\nonumber\\
&\leq\left|f(x) - f^\star_{X,a}(x)\right| + \left|f^\star_{X,a}(x)- I_{X,\alpha}(f)(x)\right|\nonumber\\
&=\left|f(x) -f^\star_{X,a}(x)\right| + \left|I_{X,\alpha}(f^\star_{X,a})(x)- I_{X,\alpha}(f)(x)\right|\nonumber\\
&=\left|f(x) - f^\star_{X,a}(x)\right| + \left|I_{X,\alpha}\left(f^\star_{X,a}- f\right)(x)\right|\nonumber\\
&\leq \left\|f - f^\star_{X,a}\right\|_\infty + \left|I_{X,\alpha}\left(f^\star_{X,a}- f\right)(x)\right|.
\end{align}
To bound the second term we use  \eqref{eq:lagrange_interpolant} and thus:
\begin{align*}
\left|I_{X,\alpha}\left(f^\star_{X,a}- f\right)(x)\right|
&=\left|\sum_{j=1}^n \left(f^\star_{X,a}- f\right)(x_j) \psi_j(x)\right|\\
&\leq \max\limits_{1\leq j \leq n} \left|\left(f^\star_{X,a}- f\right)(x_j)\right| \sum_{j=1}^n |\psi_j(x)|\\
&\leq \left\|f^\star_{X,a}- f\right\|_\infty  \lambda(x).
\end{align*}
Taking into account \eqref{eq:intermediate_lemm_proof}, we obtain
\begin{align*}
\left|f(x) - I_{X,\alpha}(f)(x)\right|\nonumber
&\leq \left\|f - f^\star_{X,a}\right\|_\infty +  \left\|f^\star_{X,a}- f\right\|_\infty \lambda(x)\\
& = \left\|f - f^\star_{X,a}\right\|_\infty \left( 1+ \lambda(x) \right),
\end{align*}
and this concludes the proof.
\end{proof}

As an illustrative example, in the same setting of Figure \ref{fig:1}, in Figure \ref{fig:2}, we plot the Lebesgue functions associated to $n=8$ equispaced, Halton and Chebyshev data and evaluated on $400$ equispaced points. Observe that in this case the Chebyshev points seem to not provide the smallest Lebesgue constant. This is in contrast with interpolation with global polynomials, but in agreement with other approximation methods.

\begin{figure}[!ht]
    \centering
   \includegraphics[clip,trim=2cm 0.73cm 2cm 0.73cm,scale = 0.35]{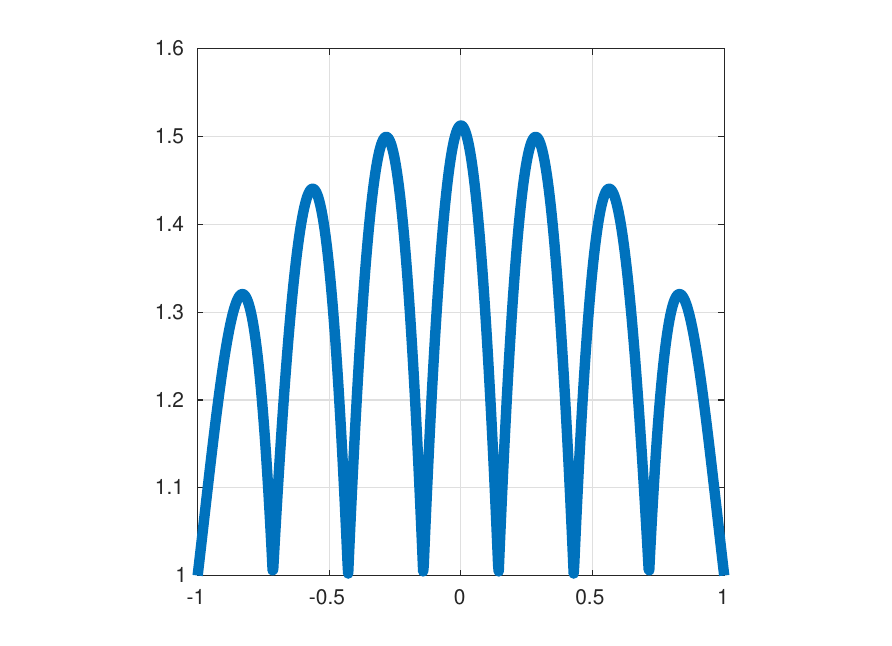}
    \includegraphics[clip,trim=2cm 0.73cm 2cm 0.73cm,scale = 0.35]{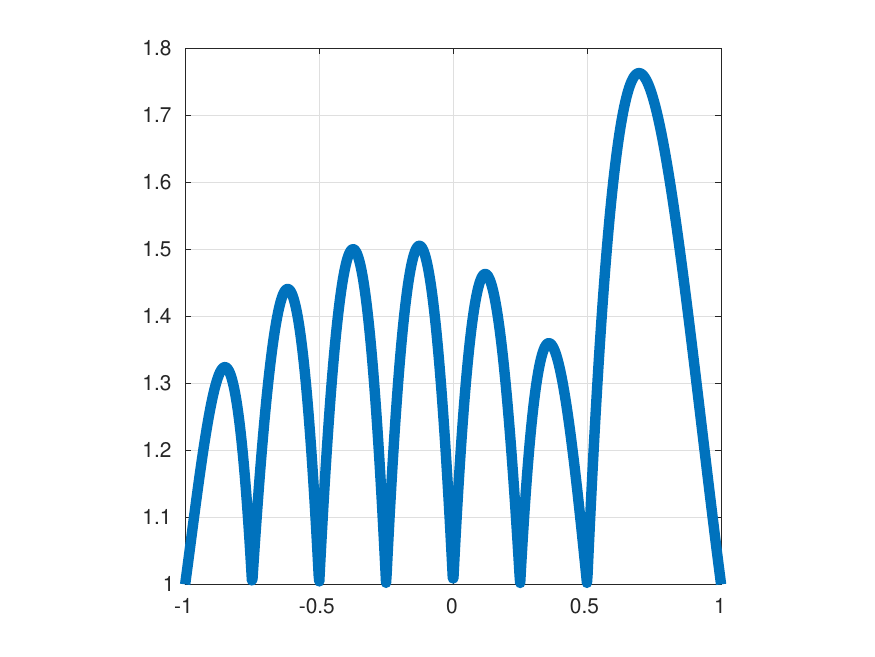}
    \includegraphics[clip,trim=2cm 0.73cm 2cm 0.73cm,scale = 0.35]{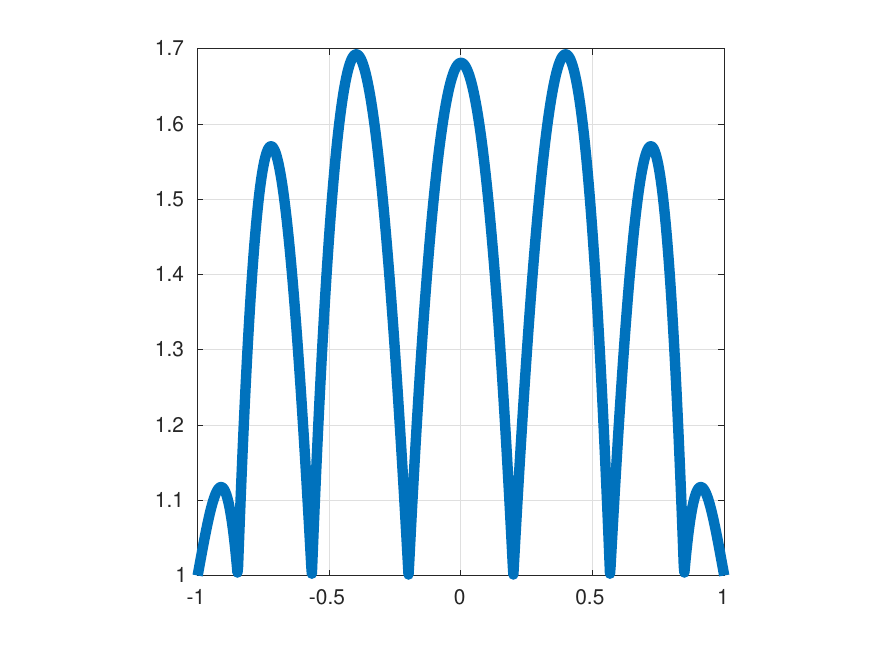}  
    \caption{From left to right: Lebesgue functions computed on $n=8$ equispaced, Halton and Chebyshev data, respectively.}
    \label{fig:2}
\end{figure}

\begin{remark}[Related results]
Observe that the error bound in \eqref{eq:lebesgue_error} is analogous but not equivalent to similar statements in other methods (e.g., polynomial or kernel-based interpolation). Indeed, the splitting of the error on the right hand side is only partially separating the $f$-dependent and the $f$-independent terms, since the best approximant $f_{X, a}^\star$ is depending on the interpolation points. In other words, one may try to minimize the first term to find \emph{good}, i.e., sub-optimal, interpolation points, but this may spoil the second term.
\end{remark}

\subsubsection{The $\lambda$-greedy algorithm}

Given such error bound, we introduce a new $\lambda$-greedy scheme that is defined as follows. Given $X$, $F$ and $\tau$, a fixed tolerance, the $\lambda$-greedy algorithm for exponential splines can be summarized Algorithm 3. 

\begin{algorithm}[H]
\small
\caption{\textbf{Pseudo-code for the $\lambda$-greedy algorithm}}
\begin{algorithmic}[1]	
\STATE Take an initial set of sorted data ${\tilde X}=\{x_i\}_{i=1}^q \subset X$; $x_1=a, x_q=b$ and $q\geq2$.
\STATE Compute $\lambda(x)$ with the initial set $\tilde{X}$.
\STATE While ${\rm argmax}_{x_i \in X\setminus {\tilde X}} \lambda(x) > \tau $:
    \begin{enumerate}    
    \item Define $x^*={\rm argmax}_{x_i \in X\setminus {\tilde X}} \lambda(x)$.
    \item Set ${\tilde X}= {\tilde X}\cup \{x^*\}$ and sort  ${\tilde X}$.
    \item Compute $\lambda(x)$ with the set $\tilde{X}$.
\end{enumerate}
 		\end{algorithmic}
\end{algorithm}

\begin{remark}[Computational aspects]
Observe that the efficient execution of the $\lambda$-greedy algorithm depends on the efficient computation of $\lambda$ and of $I_{\tilde X, \alpha}(f)$. Both of them can be be computed rather efficiently by means of the local basis. Indeed, in this case for all $x\in X$ one needs to locate the index $i$ such that $x\in [x_i, x_{i+1}]$, and then only perform local computations inside this interval.
\end{remark}

\begin{remark}
In the $\lambda$-greedy selection, we fix a tolerance for the Lebesgue functions. However, we are able to prove the efficacy, i.e. the convergence of the $\lambda$-greedy scheme, only numerically. As an illustrative example, in Figure \ref{fig:3}, we take $n=300$ equispaced nodes and we apply the $\lambda$-greedy scheme without any stopping rule, i.e. we extract ${\tilde n}=300$ nodes. This didactic example aims at understanding the behaviour of the Lebesgue constant when the number of nodes grows and how it relates with the conditioning of the problem. Precisely, from the first and second panel, we observe that  the Lebesgue constant initially decreases and then it saturates coherently with the condition number of the interpolation matrix. In the last panel we further show the sparsity of the collocation matrix that increases as the number of nodes increases. This empirically explains the behaviour of the condition number and Lebesgue functions. In other words, our $\lambda$-greedy is effective until both the condition number and the Lebesgue constant do not saturate. Then, as an alternative stopping rule, one may look at the difference between the Lebesgue constant or the condition number at two consecutive iterations of the $\lambda$-greedy scheme. 
\end{remark}

\begin{figure}[!ht]
    \centering
   \includegraphics[clip,trim=2cm 0.3cm 2cm 0.1cm,scale = 0.35]{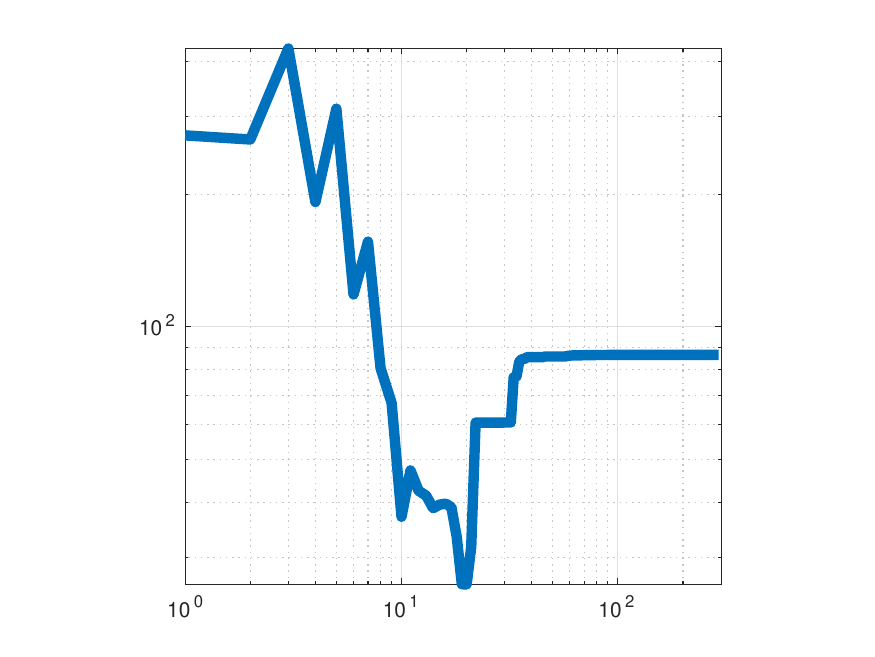}
    \includegraphics[clip,trim=2cm 0.3cm 2cm 0.1cm,scale = 0.35]{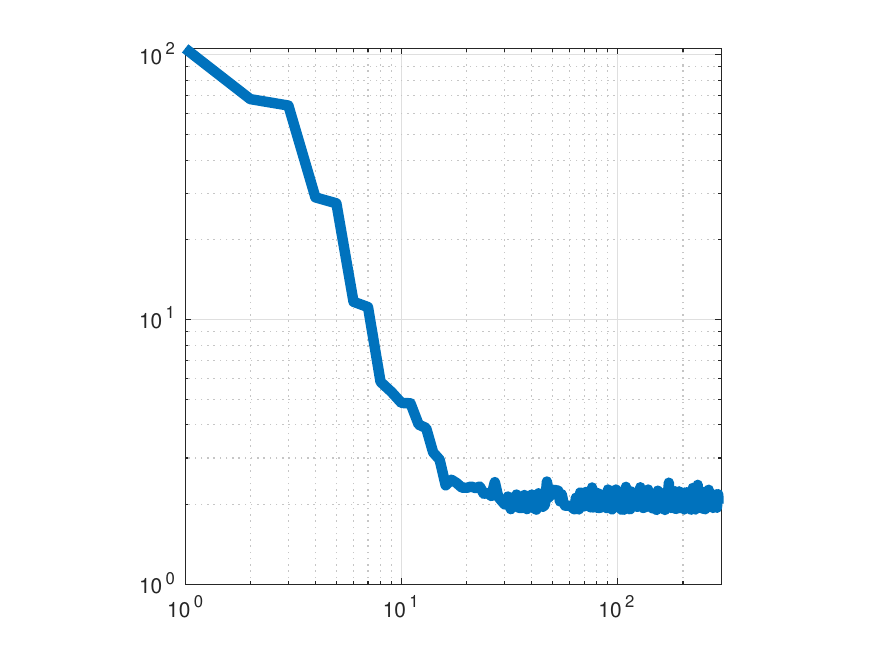}
    \includegraphics[clip,trim=2cm 0.3cm 2cm 0.1cm,scale = 0.35]{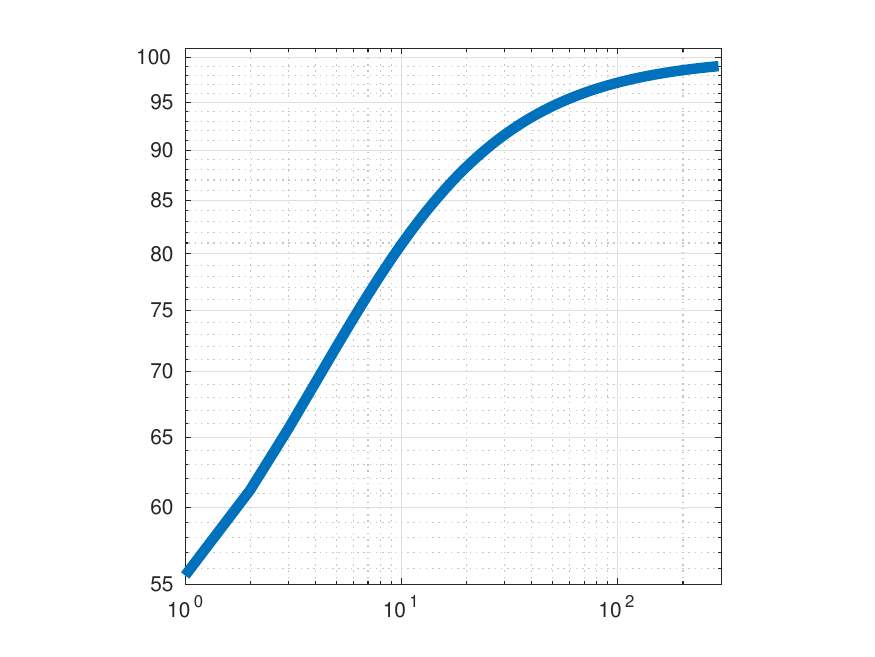}  
    \caption{Illustrative example of the $\lambda$-greedy extraction of $300$ nodes. At each step of the algorithm we compute the condition of the collocation matrix (left), the Lebesgue constant (middle) and the sparsity of the collocation matrix (right). Plots are in logarithmic scale.}
    \label{fig:3}
\end{figure}

\section{Numerical experiments}\label{sec:numerics}

In the following experiments, we test both the residual and the error-based schemes with different node distributions. Precisely, we consider equispaced data, Halton points and Chebyshev nodes. We further fix $\alpha = 2$.  Moreover, for all data sets, we take as initial set for the greedy strategy  {the first and last two nodes}. Tests have been carried out on a Intel(R) Core(TM) i7 CPU 4712MQ 2.13 GHz processor.

\subsection{Testing the $f$-greedy}

Throughout this subsection, we consider the following test function 
$$
f(x) = {\rm atan}(55x), \quad x \in [-1,1].
$$

As far as the $f$-greedy method which makes use of exponential basis functions is concerned, we fix the tolerance $\tau=10^{-3}$. In Figure \ref{fig:4}, we plot the results obtained by taking $300$ equispaced data, Halton points and Chebyshev nodes.  The number of extracted greedy nodes are respectively $\tilde{n}=36$, $30$ and $36$   that, as expected, cluster where the test function $f$ has steep gradients.  

\begin{figure}[!ht]
    \centering
    \hskip 0.1cm \includegraphics[clip,trim=2cm 0.73cm 2cm 0.1cm,scale = 0.35]{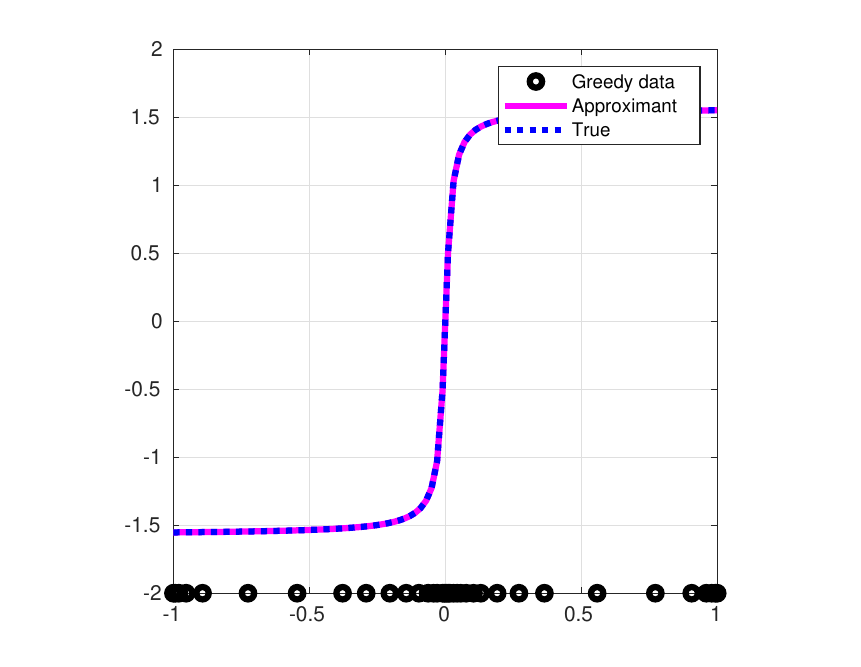}
    \includegraphics[clip,trim=2cm 0.73cm 2cm 0.1cm,scale = 0.35]{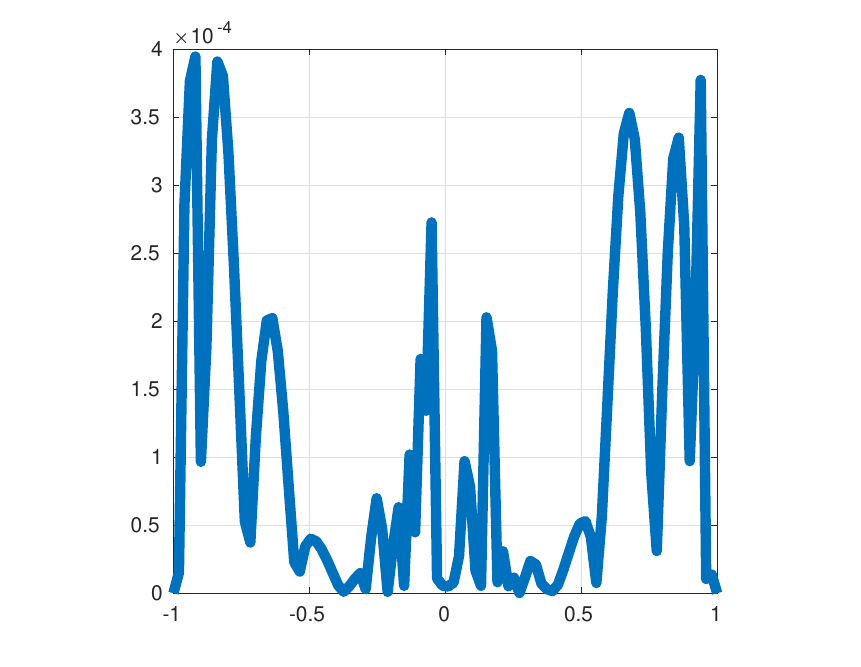}
    \includegraphics[clip,trim=2cm 0.73cm 2cm 0.1cm,scale = 0.36]{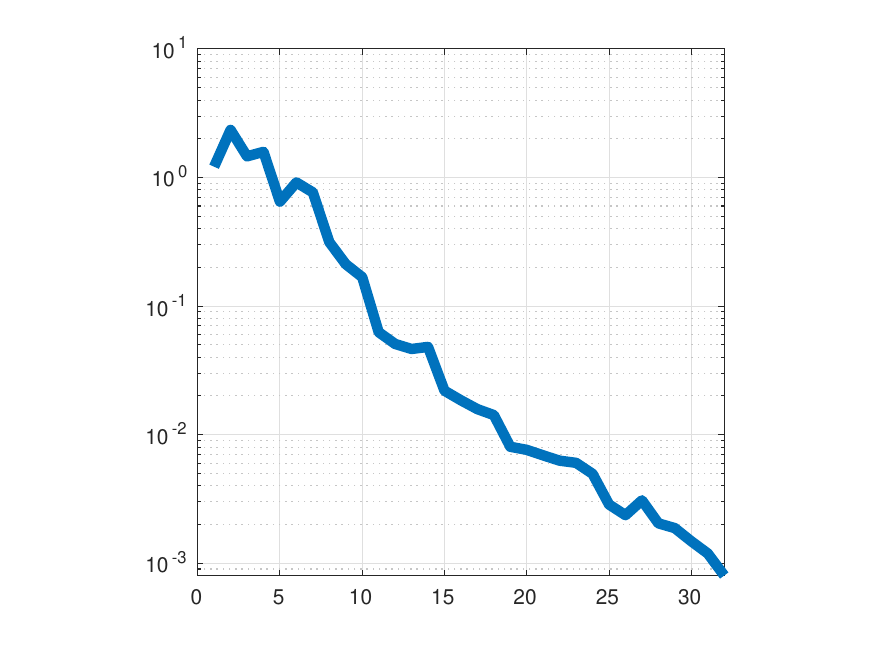}\\     \includegraphics[clip,trim=2cm 0.73cm 2cm 0.1cm,scale = 
0.35]{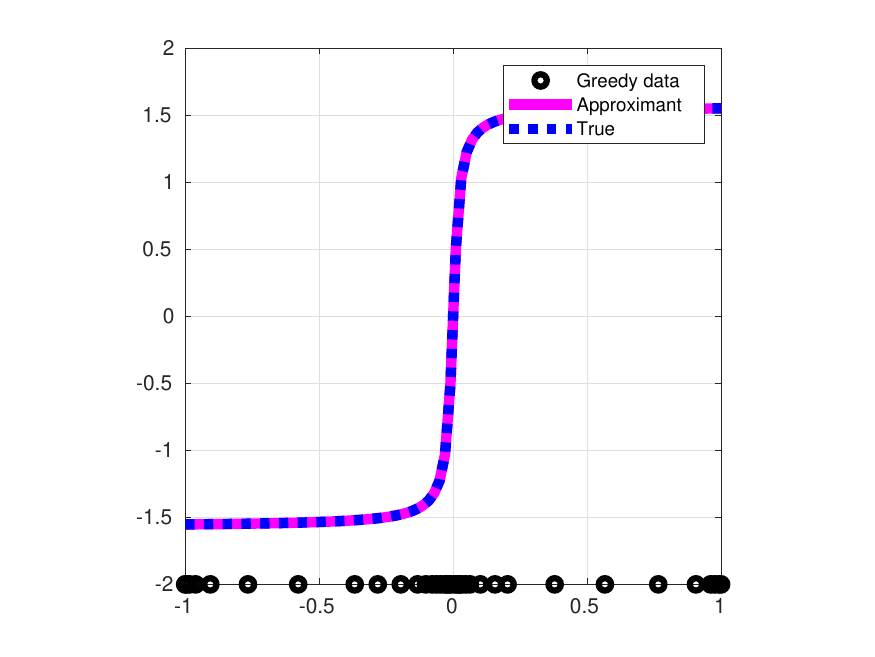}
    \includegraphics[clip,trim=2cm 0.73cm 2cm 0.1cm,scale = 0.35]{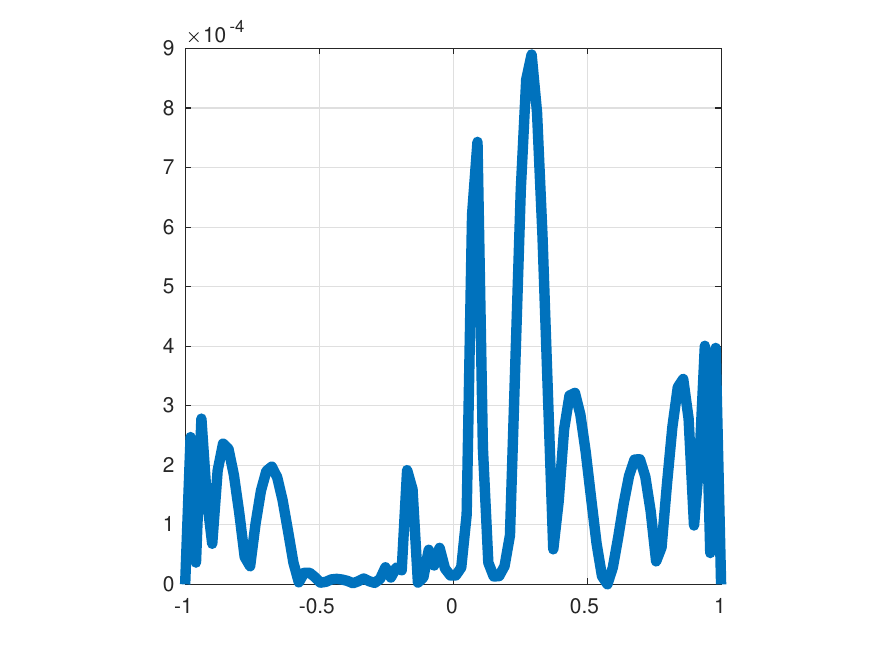}
    \includegraphics[clip,trim=2cm 0.73cm 2cm 0.1cm,scale = 0.35]{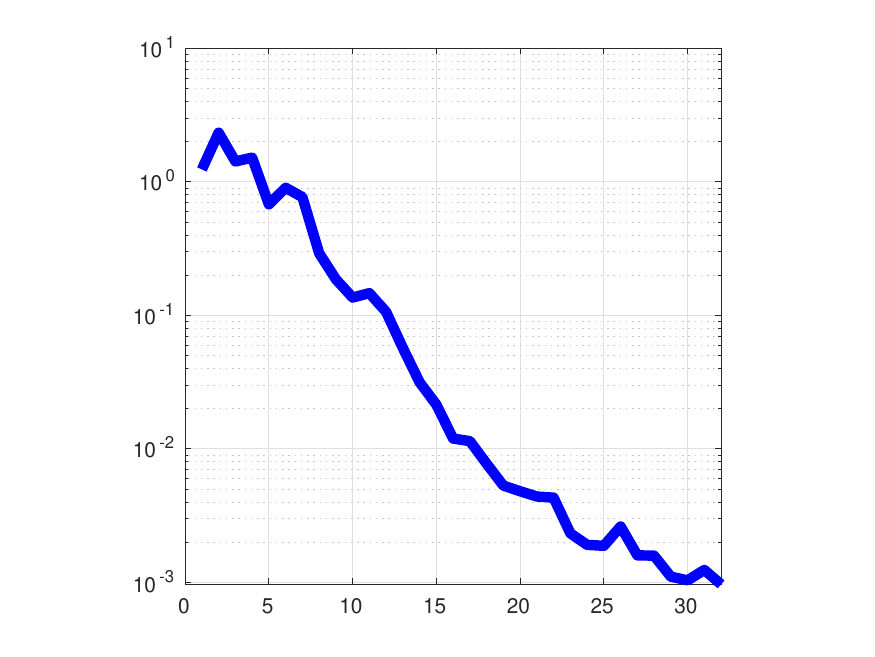}  \\ 
  \includegraphics[clip,trim=2cm 0.73cm 2cm 0.1cm,scale = 0.35]{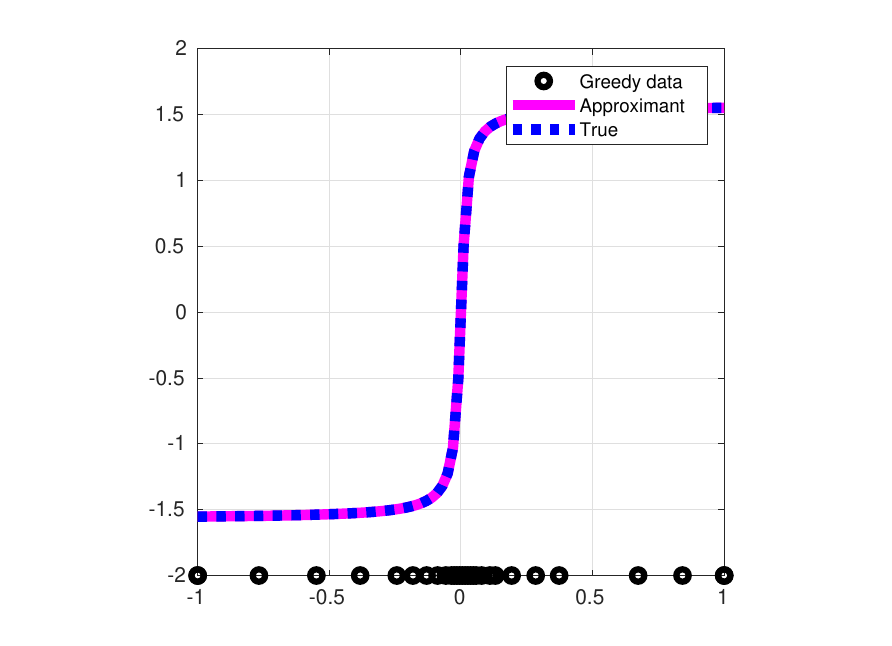}
    \includegraphics[clip,trim=2cm 0.73cm 2cm 0.1cm,scale = 0.35]{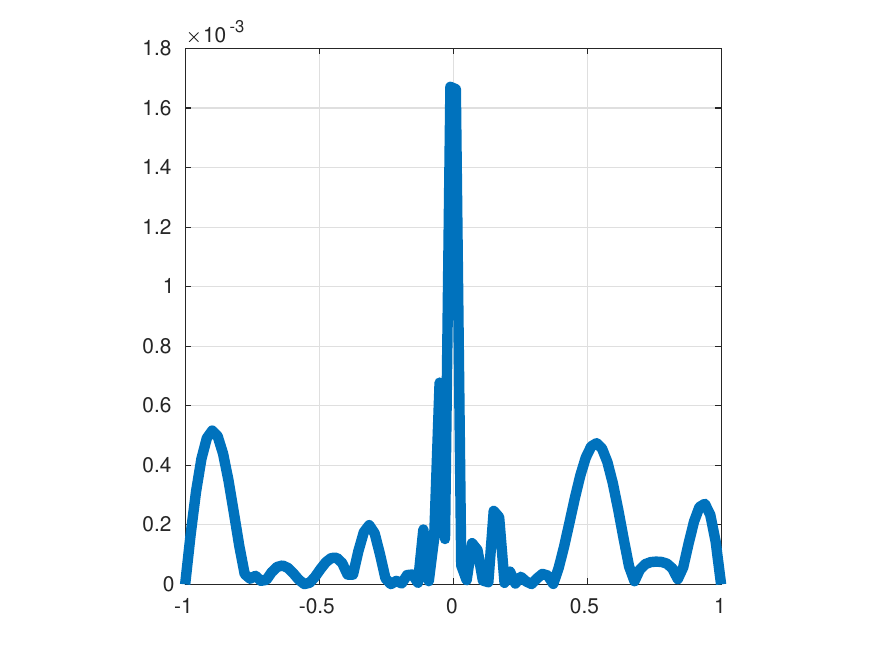}
    \includegraphics[clip,trim=2cm 0.73cm 2cm 0.1cm,scale = 0.35]{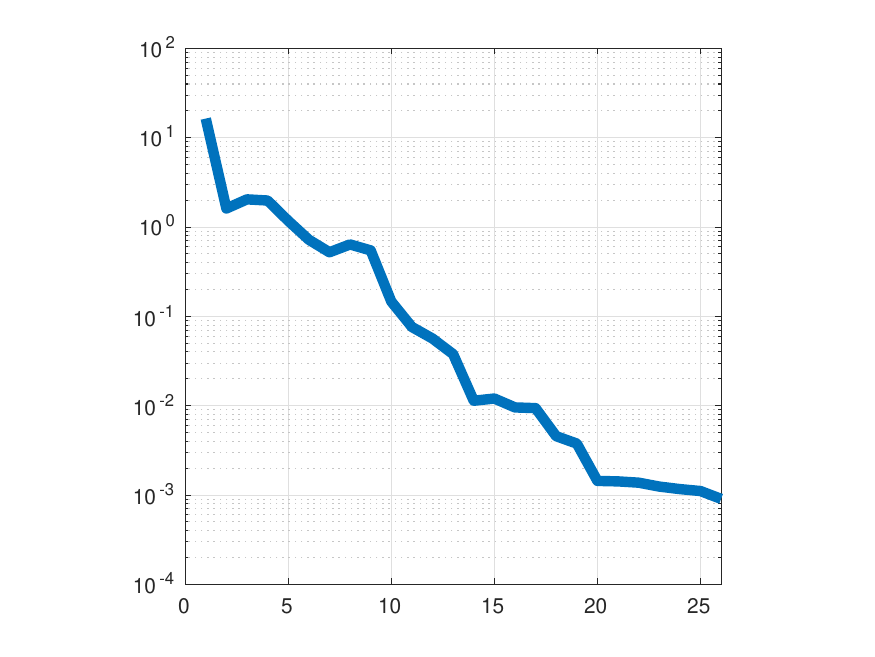}  
    \caption{Results for the $f$-greedy algorithm. First column: the extracted greedy data (black dots), the true function $f$ (blue dotted line) and the 
reconstructed function taking the greedy points (magenta solid line). Second column: the absolute error evaluated on $400$ equispaced data. Third column: the 
maximum of the residuals at each iteration of the greedy scheme. The experiment is carried out for equispaced, Halton and Chebyshev nodes, first, second and 
third row, respectively. }
    \label{fig:4}
\end{figure}

\subsection{Testing the $\lambda$-greedy}

One interesting feature of the $\lambda$-greedy scheme is that it is able to construct \emph{optimal} a priori node sets, provided that a sufficiently 
\emph{large} initial set of nodes is provided. To investigate the optimal data distribution for exponential splines, we take and initial set of $300$ equispaced 
data and we apply the $\lambda$-greedy scheme with $\tau = 2$. The result is depicted in Figure \ref{fig:5}, where we also show the same number of data (i.e. 
32) computed with the greedy algorithm and a kernel basis (thin plate splines). For kernels, as already known in literature, the points tend to distribute in a 
uniform way. When using exponential splines, the greedy data tend to cluster close to the boundary, showing some similarities with Chebyshev nodes that are 
known to be optimal for the monomial basis.     

\begin{figure}[!ht]
    \centering
    \includegraphics[clip,trim=1.5cm 5cm 0cm 5cm, scale=0.95]{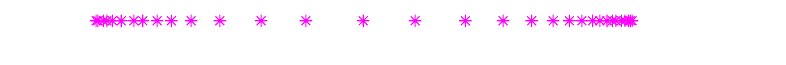}\\
    \includegraphics[clip,trim=1.5cm 5cm 0cm 5cm, scale=0.95]{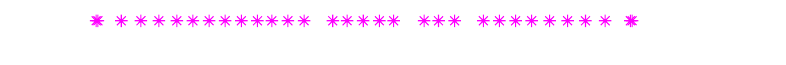}  
    \caption{Node distributions obtained via the residual-based greedy approach for EPS and kernels, respectively.}
    \label{fig:5}
\end{figure}

As last experiment, in Figure \ref{fig:6}, we plot the results of the $\lambda$-greedy scheme starting with $300$ equispaced, Halton and Chebyshev data. In this 
case, we fix the tolerance as $\tau = 3$. The algorithm    {selects} $\tilde{n}=18$, $19$ and $36$ equispaced, Halton and Chebyshev data, respectively. In all 
cases they cluster on the boundary. In the last column of Figure \ref{fig:6}, we report the Lebesgue constant at each iteration of the greedy scheme.  To get a 
feedback on the accuracy, with the reduced data, we reconstruct the function 
function $f(x)=x^2$. The associated absolute error is depicted in the second column of Figure \ref{fig:6}. 

\begin{figure}[!ht]
    \centering
   \includegraphics[clip,trim=2cm 0.73cm 2cm 0.1cm,scale = 0.35]{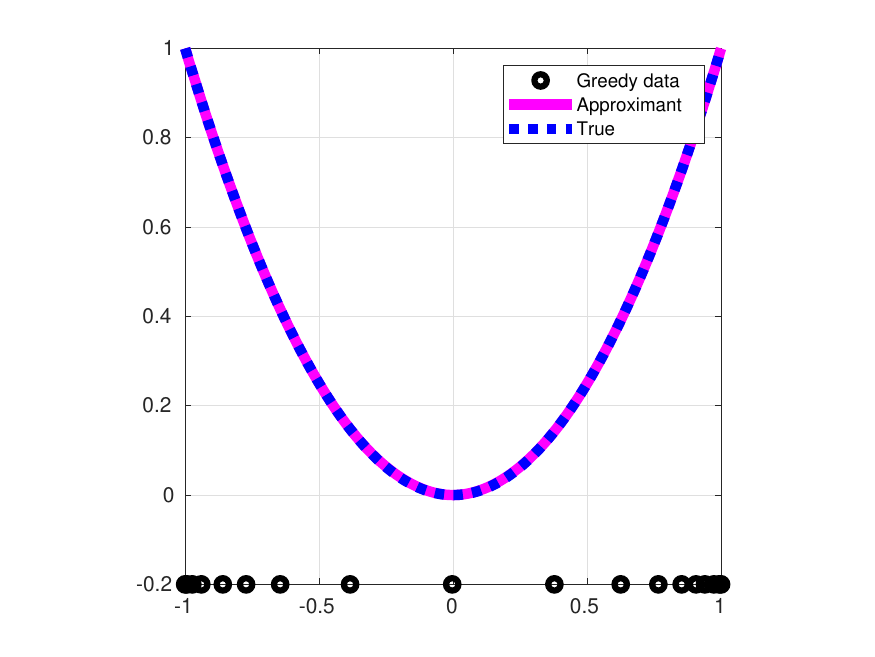}
    \includegraphics[clip,trim=2cm 0.73cm 2cm 0.1cm,scale = 0.35]{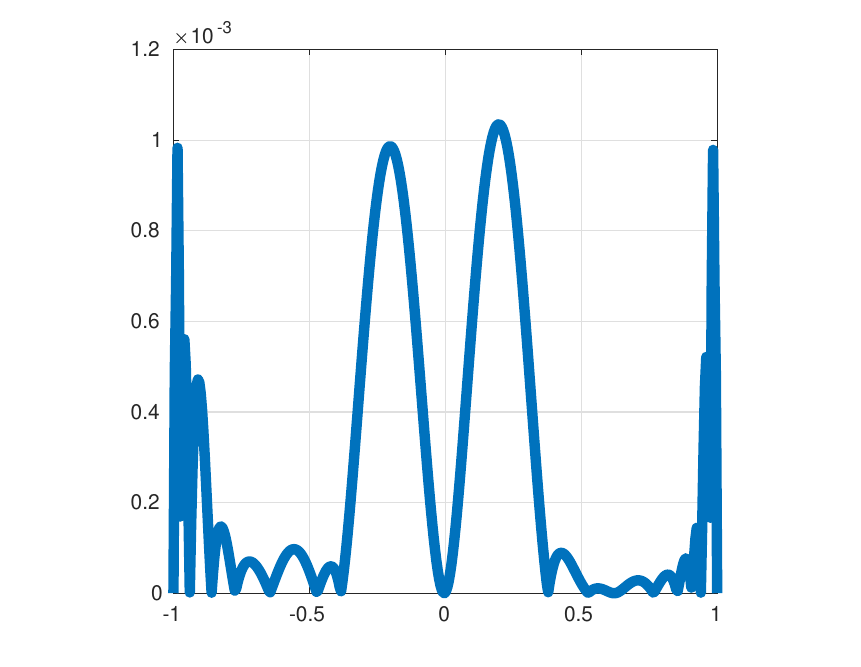}
    \includegraphics[clip,trim=2cm 0.73cm 2cm 0.1cm,scale = 0.35]{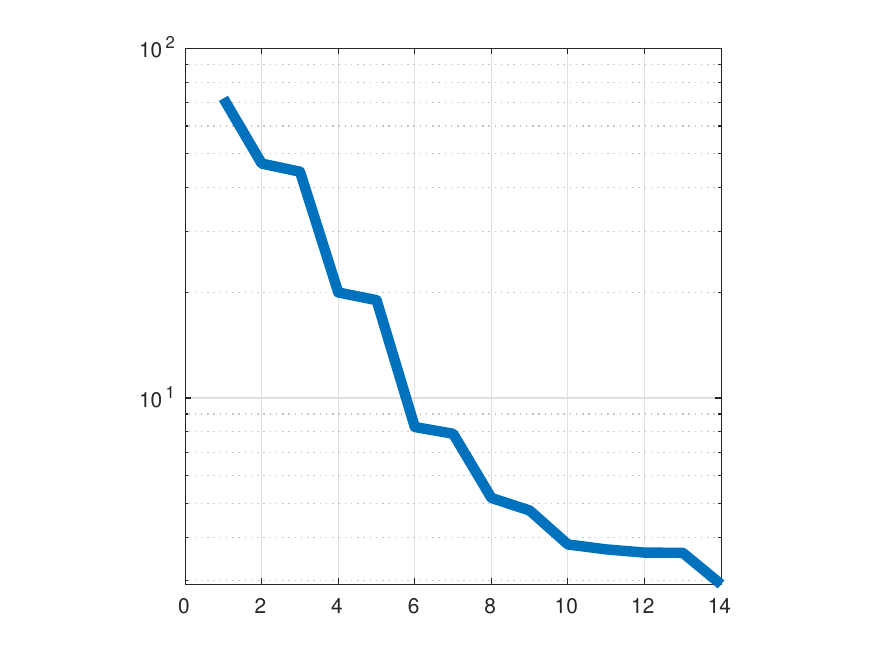}      
  \includegraphics[clip,trim=2cm 0.73cm 2cm 0.1cm,scale = 0.35]{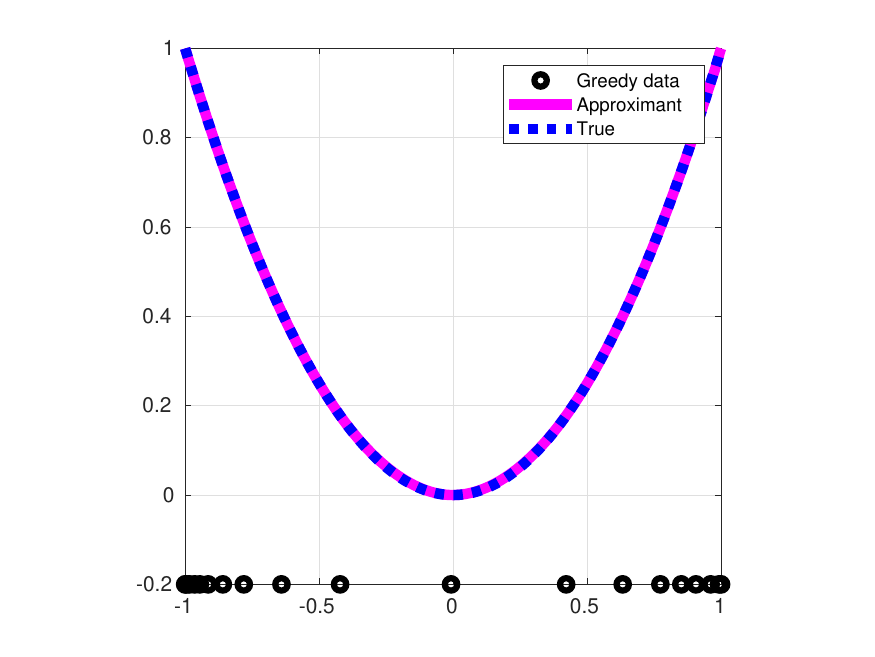}
    \includegraphics[clip,trim=2cm 0.73cm 2cm 0.1cm,scale = 0.35]{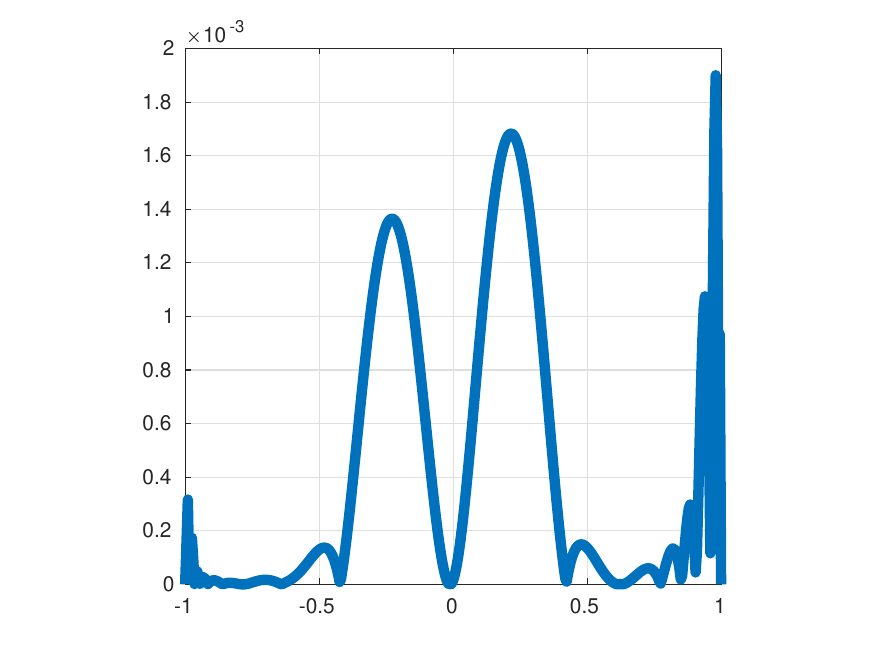}      
   \includegraphics[clip,trim=2cm 0.73cm 2cm 0.1cm,scale = 0.35]{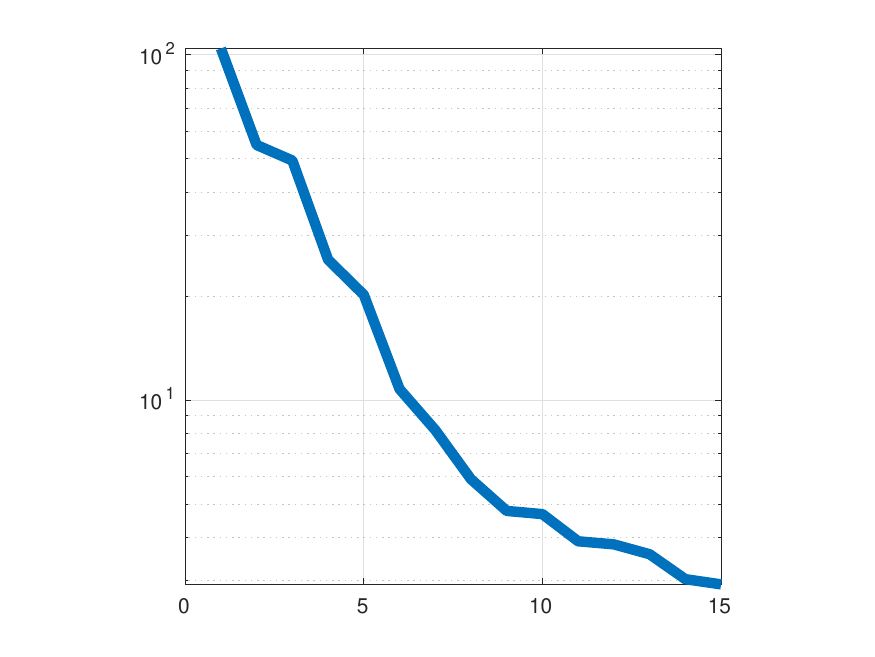}
    \includegraphics[clip,trim=2cm 0.73cm 2cm 0.1cm,scale = 0.35]{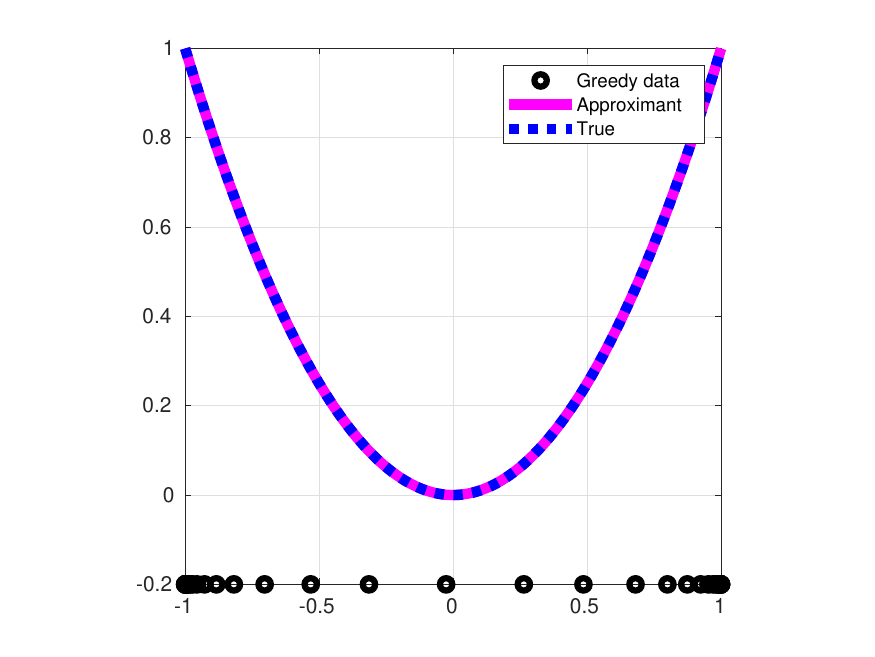}
    \includegraphics[clip,trim=2cm 0.73cm 2cm 0.1cm,scale = 0.35]{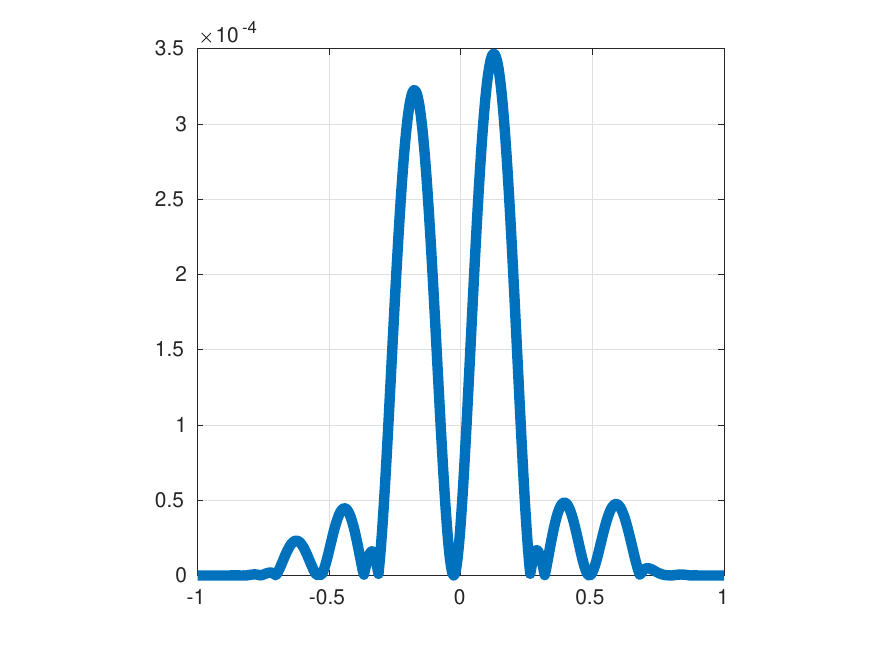}
    \includegraphics[clip,trim=2cm 0.73cm 2cm 0.1cm,scale = 0.35]{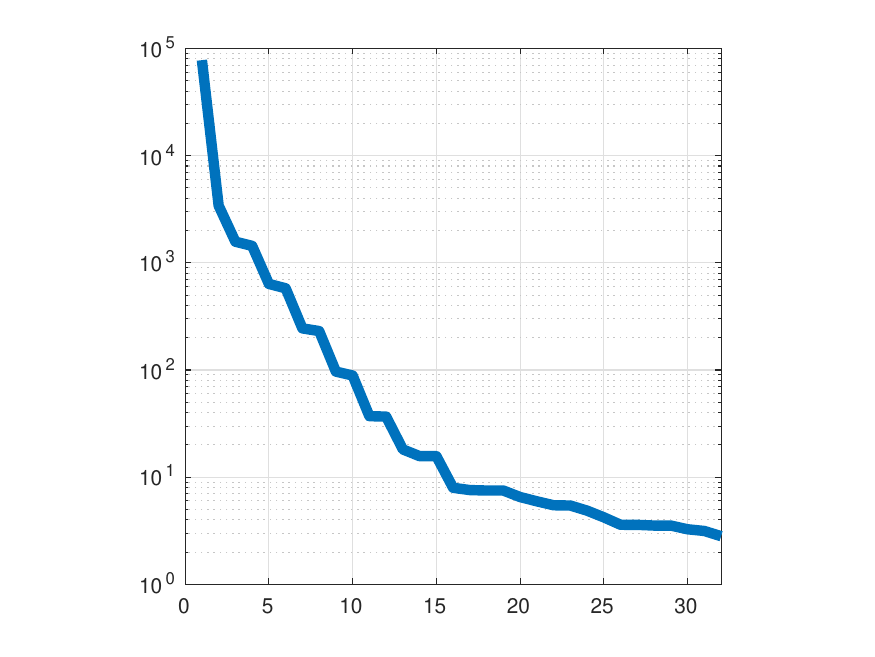}    
    \caption{Results for the $\lambda$-greedy algorithm. First column: the extracted greedy data (black dots), the true function $f$ (blue dotted line) and the reconstructed function taking the greedy points (magenta solid line). Second column: the absolute error evaluated on $400$ equispaced data. Third column: the Lebesgue constant at each iteration of the greedy scheme. The experiment is carried out for equispaced, Halton and Chebyshev nodes, first, second and third row, respectively.  }
    \label{fig:6}
\end{figure}

\section{Conclusions and work in progress}
\label{Concl}

We have investigated the use of greedy strategies for EPS interpolation. To this aim we have studied the cardinal form of the EPS interpolant and then we provided error bounds based on the Lebesgue functions. The results show that the error-based greedy points for EPS tend to cluster on the boundary of the approximation interval, despite the fact that Chebyshev points are not the optimal ones (this has been observed numerically via Figure \ref{fig:2}). 

Work in progress consists in investigating the proposed tool in applications, as in the context of Laplace transform inversion based on smoothing splines \cite{AMC_CCC}, as well as for interpolation/extrapolation algorithms for the inversion of the Fourier transform \cite{perracchione_2021}.

\section*{Acknowledgments} We thank  the support the GNCS-INdAM project \lq\lq Interpolazione e smoothing: aspetti teorici, computazionali e applicativi". This research has been done within the Italian Network on Approximation (RITA) and the thematic group on \lq\lq Approximation Theory and Applications" of the Italian Mathematical Union (UMI). EP acknowledges the financial contribution from the agreement ASI-INAF n.2018-16-HH.0.

\bibliography{biblio}
\bibliographystyle{abbrv}
\end{document}